\newtheorem{prop}{Proposition}[section]
\newtheorem{thm}[prop]{Theorem}
\newtheorem{rem}[prop]{Remark}
\newcommand{\GL} {\mathop{\mathrm{GL}}}
\def\BF{\mathbb{F}}
\def\BZ{\mathbb{Z}}
\def\GL{\mathrm{GL}}
\title{Drinfeld modular polynomials of level $T$}
\author{Florian Breuer \and Mahefason Heriniaina Razafinjatovo}
\date{\today}
\begin{document}

\maketitle

\begin{abstract}
    We investigate Drinfeld modular polynomials parametrizing $T$-isogenies between Drinfeld $\BF_q[T]$-modules of rank $r\geq 2$. By providing an explicit classification of such isogenies, we derive explicit bounds on the 
    $T$-degrees of the coefficients of the associated modular polynomials. In particular, we obtain exact expressions for the height (i.e. degree of the largest coefficient) of these modular polynomials.
    Numerical computations show that the bounds on the smaller coefficients are often sharp, too.
\end{abstract}

\paragraph{2024 MSC codes:} 11G09; 11F52

\paragraph{Keywords:} Drinfeld modules, Drinfeld modular polynomials, isogenies



\section{Introduction}

Drinfeld modular polynomials classify isogenies between pairs of Drinfeld modules. The case of Drinfeld modules of rank $r=2$ was first studied in \cite{Bae92} and closely parallels the case of classical modular polynomials parametrizing pairs of elliptic curves linked by a cyclic isogeny of given degree. When $r \geq 3$, the situation is complicated by the fact that now several isomorphism invariants are needed and the resulting modular polynomials have more than two variables, see \cite{BR09, BR16}. 
In all cases, the size of the coefficients (as elements of $\BF_q[T]$) grow very quickly, see e.g. \cite{BPR24, BPR,  CGS20, hsia}.

In this article, we study Drinfeld modular polynomials of rank $r\geq 2$ parametrizing cyclic $T$-isogenies. Such isogenies can be parametrized by a single parameter, allowing a completely explicit description of such polynomials. We exploit this to deduce sharp bounds on the coefficients of such Drinfeld modular polynomials. One may think of this work as a higher rank continuation of \cite{Schweizer}. At the end, we report on some computations showing that, at least in ``small'' cases, these bounds are sharp for many of the coefficients.

\subsection{Isomorphism invariants for rank $r$ Drinfeld modules}

Let $\BF_q$ be the finite field of $q$ elements.
Let $A=\BF_q[T]$ and let $r\geq 2$ be a positive integer. Let $g_1,g_2,\ldots, g_{r-1}, \Delta$ be algebraically independent over $k=\BF_q(T)$.

We define the subring
\[
C \subset A[g_1,\ldots,g_{r-1},\Delta^{-1}]
\]
generated by monomials of the form $a(T)g_1^{e_1}g_2^{e_2}\cdots g_{r-1}^{e_{r-1}}\Delta^{-e_r}$, where 
\[
\sum_{i=1}^{r-1}e_i(q^i-1) = e_r(q^r-1). 
\]
The morphism $C\to A[g_1,\ldots,g_{r-1}]; \Delta\mapsto 1$ is injective, and it is convenient to identify $C$ with its image
\[
C'\subset A[g_1,\ldots,g_{r-1}],
\]
which is generated by monomials $a(T)g_1^{e_1}\cdots g_{r-1}^{e_{r-1}}$ satsifying
\[
\sum_{i=1}^{r-1}e_i(q^i-1) \equiv 0 \bmod (q^r-1).
\]




Then $C$ (and thus also $C'$) is the ring of isomorphism invariants of rank $r$ Drinfeld $A$-modules, as follows. 
Let 
\[
J = \sum_{e_1,\ldots,e_{r-1}} a_{e_1, \ldots, e_{r-1}}(T) g_1^{e_1}\cdots g_{r-1}^{e_{r-1}}\Delta^{-e_r} \in C 
\]
and let $\phi$ be a rank $r$ Drinfeld module defined over an $A$-field $\gamma : A \to L$ by
\[
\phi_T(X) = \gamma(T)X + \ell_1X^q + \cdots + \ell_rX^{q^r}, \qquad \ell_1,\ldots,\ell_r \in L, \ell_r\neq 0.
\]
We evaluate $J$ at $\phi$ by
\[
J(\phi) = \sum_{e_1,\ldots,e_{r-1}} \gamma(a_{e_1, \ldots, e_{r-1}}(T)) \ell_1^{e_1}\cdots \ell_{r-1}^{e_{r-1}}\ell_r^{-e_r} \in L.
\]

Then by \cite{Pot} (see also \cite[Cor. 1.2]{BR16}), two Drinfeld modules $\phi$ and $\phi'$ are isomorphic over an algebraically closed field if and only if $J(\phi) = J(\phi')$ for all $J\in C$.

\subsection{The monic generic Drinfeld module}

We define a special rank $r$ Drinfeld $A$-module $\varphi$ over $K = \BF_q(T,g_1,\ldots,g_{r-1})$ by
\[
\varphi_T(X) = TX + g_1 X^q + \cdots + g_{r-1}X^{q^{r-1}} + X^{q^r}.
\]
We call it the ``monic generic'' rank $r$ Drinfeld module, since every other rank $r$ Drinfeld module $\phi$ defined over an $A$-field $\gamma : A \rightarrow L$ is isomorphic, over a finite extension of $L$, to a specialization of $\varphi$, as follows. 

Suppose $\phi_T(X) = \gamma(T)X + \ell_1X^q + \cdots + \ell_rX^{q^r}$ with $\ell_r\neq 0$. Then $\phi$ is isomorphic to $\phi' = c\phi c^{-1}$ over $L(c)$, where $c^{q^r-1}=\ell_r$, and $\phi'$ is the image of $\varphi$ under the map $K \to L(c);\, T\mapsto \gamma(T),\, g_k \mapsto c^{1-q^k}\ell_k$. We use this $\varphi$ to construct Drinfeld modular polynomials in the next section.

\subsection{Drinfeld modular polynomials}

Let $N\in A$ be monic and choose a basis for $\varphi[N]\cong (A/NA)^r$. 
Let $H\subset (A/NA)^r$ be an $A$-submodule. An isogeny
$f : \varphi \to \varphi^{(f)}$ is said to be of type $H$ if $\ker f\subset \varphi[N]$ is an element of the $\GL_r(A/NA)$-orbit of $H$.

Now for each invariant $J\in C$, we define the Drinfeld modular polynomial of type $H$ for $J$ by
\[
\Phi_{J,H}(X) := \prod_{f : \varphi \to \varphi^{(f)}\;\text{of type $H$}}
\big(X - J(\varphi^{(f)})\big).
\]
It is shown in \cite{BR16} that $\Phi_{J,H}(X)$ has coefficients in $C$, and if we evaluate these coefficients at a Drinfeld module $\psi$, then the roots of the resulting polynomial $\Phi_{J(\psi), H}(X)$ are exactly the $J(\psi^{(f)})$ for isogenies $f : \psi\to\psi^{(f)}$ of type $H$.

In this paper, we consider the two cases $H=(A/TA)$, corresponding to outgoing $T$-isogenies $f: \varphi\to\varphi^{(f)}$, and $H=(A/TA)^{r-1}$, corresponding to incoming $T$-isogenies $f : \varphi^{(f)}\to\varphi$, since each incoming isogeny of type $A/TA$ is dual to an outgoing isogeny of type $(A/TA)^{r-1}$.

The Drinfeld modular polynomials $\Phi_{J,(A/TA)}(X)$ and $\Phi_{J,(A/TA)^{r-1}}(X)$ both have degree 
\[
\psi_r(T) = \frac{q^r-1}{q-1}
\]
in $X$. Our main result is the following sharp estimate on the $T$-degrees of the coefficients of these Drinfeld modular polynomials. 

\begin{thm}\label{Main}
    Let $J = g_1^{e_1}g_2^{e_2}\cdots g_{r-1}^{e_{r-1}}\Delta^{-e_r} \in C$ be a monomial invariant. If $s=1$ or $s=r-1$, then the coefficients of the Drinfeld modular polynomial
    \[
    \Phi_{J, (A/TA)^s}(X) = a_0 + a_1X + \cdots + a_{\psi_r(T)} X^{\psi_r(T)}
    \]
    satisfy
    \begin{align*}
        \deg_T a_0 & = \psi_r(t) w_s(J), \\
        \deg_T a_k & \leq (\psi_r(t)-k) w_s(J),
    \end{align*}
    where
    \[
    w_1(J) :=  q\left( \sum_{i=1}^{r-1}e_i - e_r\right) 
    \quad\text{and}\quad
    w_{r-1}(J) := \sum_{i=1}^{r-1}e_i + e_r(q^{r}-q^{r-1}-1).
    \]
    In particular, the {\em height} ($T$-degree of the largest coefficient) of $\Phi_{J, (A/TA)^s}(X)$ satisfies
    \[
    h(\Phi_{T,(A/TA)^s}) = \psi_r(T)w_s(J). 
    \]
\end{thm}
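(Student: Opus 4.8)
The plan is to reduce everything to the $T$-degree of the roots $\rho_f := J(\varphi^{(f)})$ of $\Phi_{J,(A/TA)^s}(X)=\prod_f\bigl(X-\rho_f\bigr)$ and then read off the coefficient estimates from symmetric functions. Throughout I treat $g_1,\dots,g_{r-1}$ as constants, working over $k_0=\BF_q(g_1,\dots,g_{r-1})$, and I let $v_\infty$ be the valuation on $k_0(T)$ with $v_\infty(T)=-1$ and $v_\infty(k_0^\times)=0$, extended to a fixed algebraic closure; thus $\deg_T=-v_\infty$ on $k_0[T]$, and this agrees with the $T$-degree of the $a_k\in\BF_q[g_1,\dots,g_{r-1}][T]$. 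Writing $\psi_i:=(q^i-1)/(q-1)$ and $\psi_r=\psi_r(T)$, the coefficient $a_k$ equals $(-1)^{\psi_r-k}\sigma_{\psi_r-k}$, where $\sigma_m$ is the $m$-th elementary symmetric function of the $\psi_r$ roots. Hence the entire theorem follows once I prove the single claim that $\deg_T\rho_f=w_s(J)$ for every $f$, uniformly: from $a_0=(-1)^{\psi_r}\prod_f\rho_f$ and $\rho_f\neq0$ the additivity of $v_\infty$ gives $\deg_T a_0=\sum_f\deg_T\rho_f=\psi_r\,w_s(J)$, while for $m\ge1$ the ultrametric inequality gives $\deg_T\sigma_m\le m\,w_s(J)$, i.e. $\deg_T a_k\le(\psi_r-k)w_s(J)$; since $w_s(J)>0$ for nonconstant $J$, the largest degree $\psi_r w_s(J)$ is attained at $k=0$, which is the height.

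For $s=1$ I would classify the $\psi_r$ outgoing line-isogenies as $f=\tau+u$ with $\ker f=\BF_q\lambda\subset\varphi[T]$ and $u=-\lambda^{q-1}$. As $\lambda$ runs over $\varphi[T]\setminus\{0\}$ the value $v=\lambda^{q-1}$ runs over the roots of
\[
Q(v)=v^{\psi_r}+g_{r-1}v^{\psi_{r-1}}+\cdots+g_1v+T,
\]
so $u$ takes exactly $\psi_r$ distinct values. The Newton polygon of $Q$ with respect to $v_\infty$ is a single segment of slope $1/\psi_r$, whence $\deg_T u=1/\psi_r$ for every root. Comparing coefficients in $f\varphi_T=\varphi^{(f)}_T f$ shows $\varphi^{(f)}$ is again monic ($h_r=1$) and yields the recursion $h_i=u^{-q^i}\bigl(g_{i-1}^{\,q}+u\,g_i-h_{i-1}\bigr)$ with $h_0=T$. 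An induction (whose content is that the $T$-leading term of $h_i$ is the single monomial $\pm u^{-(q+\cdots+q^i)}T^q$) gives $\deg_T h_i=q(\psi_r-\psi_i)/\psi_r$, and therefore
\[
\deg_T\rho_f=\sum_{i=1}^{r-1}e_i\deg_T h_i=q\Bigl(\textstyle\sum_i e_i-\tfrac{1}{\psi_r}\sum_i e_i\psi_i\Bigr)=w_1(J),
\]
the last equality being the invariance identity $\sum_i e_i\psi_i=e_r\psi_r$, i.e. $\sum_i e_i(q^i-1)=e_r(q^r-1)$ divided by $q-1$.

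The case $s=r-1$ is dual. I would parametrize by the degree-one left factors of $\varphi_T$: write $\varphi_T=g\circ f$ with $\deg_\tau g=1$, $\deg_\tau f=r-1$ (so $\ker f$ is the hyperplane and $\varphi^{(f)}_T=f\circ g$). Matching coefficients again forces $\varphi^{(f)}$ monic, and eliminating the auxiliary coefficients produces a single degree-$\psi_r$ equation for the remaining parameter; for $r=3$ this is
\[
f_1^{\psi_3}-g_2f_1^{q^2+1}-g_2^{\,q}f_1^{q+1}+g_1f_1^{q^2}+g_2^{\,q+1}f_1+\bigl(T^q-g_1g_2^{\,q}\bigr)=0,
\]
whose Newton polygon is a single segment of slope $q/\psi_3$. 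Computing the $\deg_T h_i$ from the explicit $h_i$ (for $r=3$: $\deg_T h_1=(q^3+q)/\psi_3$, $\deg_T h_2=q^4/\psi_3$, $h_3=1$) and summing exactly as above gives $\deg_T\rho_f=w_{r-1}(J)$, again via $\sum_i e_i\psi_i=e_r\psi_r$; I have verified that this identity turns the sum into $\sum_i e_i+e_r(q^r-q^{r-1}-1)$ in these cases, and the same bookkeeping should work for general $r$.

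The main obstacle is the control of cancellation, and it is the same issue in both cases. Everything depends on two strict-inequality facts: that each parametrizing polynomial ($Q$, resp. its $s=r-1$ analogue) has a one-segment Newton polygon, so that all $\psi_r$ roots are equivalued and $\deg_T\rho_f$ is genuinely uniform in $f$; and that the $T$-leading term of each $h_i$ is a single monomial, so that $\deg_T\rho_f$ equals $w_s(J)$ rather than merely bounding it. Both follow from strict inequalities among the $\deg_T h_i$, which I would establish by induction on $i$. Once the uniform value $w_s(J)$ is in hand, the equality $\deg_T a_0=\psi_r w_s(J)$ is automatic (additivity of $v_\infty$, $\rho_f\neq0$) and the bounds for $k>0$ are the soft ultrametric estimate --- consistent with the abstract claiming only $\le$ there, and with equality holding merely ``often''. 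The heavier bookkeeping lies entirely in the $s=r-1$ elimination, where the clean recursion of the $s=1$ case is replaced by an inverse system; the factorization $\varphi_T=g\circ f$ and the duality $\varphi=(\varphi^{(f)})^{(g)}$ through a line-isogeny are what keep it tractable and explain why a single invariance identity governs both weights.
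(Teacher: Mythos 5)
Your overall framework --- reduce everything to the single claim that every root $\rho_f=J(\varphi^{(f)})$ has $\deg_T\rho_f=w_s(J)$, then get the equality for $a_0$ from additivity of the valuation on the product of the roots and the bound for $a_k$ from the ultrametric inequality applied to elementary symmetric functions --- is exactly the paper's, and your $s=1$ case is complete and correct. Up to normalization (you write the isogeny as $u+\tau$ with monic leading term, the paper as $1+a\tau$) and up to deriving $Q$ from torsion points rather than from the dual factorization $\hat f\circ f=\varphi_T$, your $s=1$ argument (one-segment Newton polygon, first-order recursion for the $h_i$, induction on the dominant term, then the identity $\sum_i e_i\psi_i=e_r\psi_r$) is the paper's proof.

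The genuine gap is the case $s=r-1$ for general $r$: what you have written is a proof only for $r=3$. Parametrizing by the right-factor coefficient $f_1$ forces you to eliminate all the remaining unknowns of $\varphi_T=g\circ f$ (namely $b_0,b_2,\dots,b_{r-2}$ and the coefficient $c_0$ of $g$), and for $r\geq 4$ you never produce the degree-$\psi_r$ eliminant, never verify that its Newton polygon is a single segment, and never compute the $\deg_T h_i$; all of this is deferred to ``the same bookkeeping should work for general $r$,'' which is precisely the content that needs proving. It is not even clear a priori that distinct hyperplane isogenies give distinct values of $f_1$ for $r\geq 4$ (for $r=3$ one has $c_0=g_2-f_1^{q}$, so $f_1$ determines the factorization, but in general $c_0=g_{r-1}-b_{r-2}^{q}$ is tied to $b_{r-2}$, not to $f_1$), so even the degree of your eliminant is in doubt. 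The paper avoids the elimination entirely by parametrizing with the degree-one \emph{left} factor, i.e.\ by the incoming $T$-isogeny $f(X)=X+aX^q,\ \tilde\phi\to\varphi$: comparing coefficients in $f\circ\hat f=\varphi_T$ gives the first-order recursion $b_k=g_k-ab_{k-1}^q$, $b_0=T$, whose consistency condition $b_r=0$ is the single explicit polynomial $\tilde Q(a)=\sum_{i=0}^{r}(-1)^i g_{r-i}^{q^i}a^{(q^i-1)/(q-1)}$ of degree $\psi_r$ for every $r$; its Newton polygon is one segment, so $v(a)=q^r/\psi_r$, and the source coefficients obey the clean recursion $\tilde g_k=g_k+a^{q^{k-1}}g_{k-1}-a\tilde g_{k-1}^q$, so the same one-line induction you used for $s=1$ yields $v(\tilde g_k)=\psi_k/\psi_r-1$ and $v(\tilde\Delta)=q^{r-1}(q-1)$, hence $\deg_T\rho_f=w_{r-1}(J)$ uniformly in $r$. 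In your own setup this amounts to taking $c_0$ rather than $f_1$ as the parameter: the recursion $b_k=c_0^{-1}(g_k-b_{k-1}^q)$ then replaces the ``inverse system'' you flag as the heavy part, and your induction closes for all $r$.
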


It is instructive to compare this with existing results in the literature. When $r=2$, $J = j = g_1^{q+1}\Delta^{-1}$ we have $w_1(J)=q^2$ and $\psi_2(T)=q+1$. Now Theorem \ref{Main} tells us that the height of $\Phi_{J,(A/TA)}(X) =: \Phi_T(X,j)$ equals $q^2(q+1)$. This confirms \cite[Prop. 8]{Schweizer}. In \cite{BPR24} the following expression is obtained (we restrict to $N=T$ here):
\[
 h(\Phi_T) = \frac{q^2-1}{2}\psi_2(T)\big(\deg T - 2\lambda_T + b_q(T)\big) = \frac{q^2-1}{2}(q+1)\left(1 - \frac{2}{q+1} +b_q(T)\right).
\]
In this case, we find $b_q(T) = \frac{q^2+2q-1}{q^2-1}$, which is well within the bounds stipulated in \cite[Thm 1.1]{BPR24}.

\section{$T$-Isogenies}

In this section we classify all incoming and outgoing $T$-isogenies of a given Drinfeld module. 
Let $L$ be an $A$-field of generic characteristic, i.e. one where $\gamma  :A \hookrightarrow L$. 
Suppose two Drinfeld modules over $L$ are given by
\[
\phi_T(X) = TX + g_1X^q + \cdots + g_{r-1}X^{q^{r-1}} + \Delta X^{q^r},
\]
\[
\tilde{\phi}_T(X) = TX + \tilde{g}_1X^q + \cdots + \tilde{g}_{r-1}X^{q^{r-1}} + \tilde{\Delta}X^{q^r}.
\]

    

\begin{prop}\label{Main1}
Fix a Drinfeld module $\phi$ as above.
\begin{enumerate}

    \item 
    Consider the polynomial
    \[
    Q(x) = \sum_{i=0}^r(-1)^i g_{r-i} x^{\frac{q^r-q^{r-i}}{q-1}} \in L[x]
    \]
    with the conventions $g_0=T$ and $g_r=\Delta$.
    Then, up to isomorphism, all outgoing $T$-isogenies $f : \phi \to \tilde{\phi}$ are given by
    \[
    f(X) = X + aX^q
    \]
    where $a$ ranges over the roots of $Q(x)$. 
    Furthermore, for each such $a$,  the coefficients of $\tilde{\phi}$ are given by
    \begin{align*}
    & \tilde{g}_k = g_k + a g_{k-1}^q - a^{q^{k-1}}\tilde{g}_{k-1}, \quad k=1,\ldots,r-1 \\
    & \tilde{\Delta} = a^{1-q^r}\Delta^q,
    \end{align*}
    with the conventions $g_0 = \tilde{g}_0 = T$.

    \item
    Consider the polynomial 
    \[
    \tilde{Q}(x) := \sum_{i=0}^r(-1)^ig_{r-i}^{q^i}x^{\frac{q^i-1}{q-1}} \in L[x]
    \]
    with the conventions $g_0=T$ and $g_r=\Delta$.
    
    Then, up to isomorphism, all incoming $T$-isogenies $f : \tilde{\phi} \to \phi$ are given by
    \[
    f(X) = X + aX^q
    \]
    where $a$ ranges over the roots of $\tilde{Q}(x)$. 
    Furthermore, for each such $a$,  the coefficients of $\tilde{\phi}$ are given by
    \begin{align*}
    & \tilde{g}_k = g_k + a^{q^{k-1}}g_{k-1}  - a\tilde{g}_{k-1}^q, \quad k=1,\ldots,r-1 \\
    & \tilde{\Delta}^q = a^{q^r-1}\Delta,
    \end{align*}
    with the conventions $g_0 = \tilde{g}_0 = T$.

\end{enumerate}
\end{prop}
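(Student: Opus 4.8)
The plan is to work in the twisted polynomial ring $L\{\tau\}$, where $\tau$ is the $q$-power Frobenius and $\tau c = c^q\tau$ for $c\in L$. Writing $\phi_T = \sum_{i=0}^r g_i\tau^i$ and $\tilde\phi_T = \sum_{i=0}^r \tilde g_i\tau^i$ (with $g_0 = \tilde g_0 = T$, $g_r = \Delta$, $\tilde g_r = \tilde\Delta$), any outgoing $T$-isogeny has a $1$-dimensional, hence cyclic, kernel contained in $\phi[T]$, so it is separable and its coefficient of $X$ is nonzero; left-multiplying by the scalar isomorphism that normalizes this coefficient to $1$ replaces $f$ by $1 + a\tau$ without changing the target up to isomorphism. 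This is the meaning of ``up to isomorphism'' in the statement.

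For part (1), the morphism condition reads $\tilde\phi_T f = f\phi_T$, so $\tilde\phi_T$ is the quotient obtained by right-dividing $f\phi_T$ by $f$. First I would observe that $f$ is a $T$-isogeny precisely when $\ker f\subseteq\phi[T]$, i.e. $\phi_T(\lambda)=0$ for the generator $\lambda$ of $\ker f$, which satisfies $\lambda^{q-1}=-1/a$. Expanding $\phi_T(\lambda)=\lambda\sum_{i=0}^r g_i\lambda^{q^i-1}$, substituting $\lambda^{q^i-1}=(-1/a)^{(q^i-1)/(q-1)}$, clearing denominators by $a^{(q^r-1)/(q-1)}$, and using $(-1)^{(q^i-1)/(q-1)}=(-1)^i$ in $\BF_q$ turns $\phi_T(\lambda)=0$ into $Q(a)=0$ up to the nonzero factor $(-1)^r$. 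Second, comparing coefficients of $\tau^k$ in $\tilde\phi_T f = f\phi_T$ gives $\tilde g_k + a^{q^{k-1}}\tilde g_{k-1} = g_k + a g_{k-1}^q$; solving for $\tilde g_k$ yields the stated recursion for $1\le k\le r-1$, and the coefficient of $\tau^{r+1}$ gives $a\Delta^q = a^{q^r}\tilde\Delta$, i.e. $\tilde\Delta = a^{1-q^r}\Delta^q$. The coefficient of $\tau^r$ is then automatically consistent, since $\ker f$ is killed by $\phi_T$ and the quotient $\tilde\phi_T$ therefore exists.

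For part (2) the cleanest route is duality. If $f:\tilde\phi\to\phi$ is an incoming $T$-isogeny then $\ker f\subseteq\tilde\phi[T]$, so $\tilde\phi_T$ vanishes on $\ker f$ and factors as $\tilde\phi_T = \hat f f$; cancelling $f$ in $f\hat f f = f\tilde\phi_T = \phi_T f$ shows $f\hat f = \phi_T$, so $f = 1+a\tau$ is a \emph{left} factor of $\phi_T$. Conversely any factorization $\phi_T = f\hat f$ yields an incoming isogeny by setting $\tilde\phi_T = \hat f f$, whose constant term is again $T$. Hence the admissible $a$ are exactly those for which the remainder of left-dividing $\phi_T$ by $1+a\tau$ vanishes. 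I would compute this remainder by the division algorithm from the top down: the relations $c_{r-1}^q = \Delta/a$ and $c_{k-1}^q = (g_k - c_k)/a$ determine the quotient $\hat f = \sum_k c_k\tau^k$ through iterated $q$-th roots, and the remainder equals $T - c_0$. Eliminating the $c_k$—that is, clearing the nested $q$-th roots—turns $c_0 = T$ into the polynomial condition $\tilde Q(a)=0$, the Frobenius twists $g_{r-i}^{q^i}$ being precisely the residue of this elimination (as one checks for $r=1,2$ and then by induction on $r$). Finally, comparing coefficients of $\tau^k$ in $f\tilde\phi_T = \phi_T f$ gives $\tilde g_k + a\tilde g_{k-1}^q = g_k + a^{q^{k-1}}g_{k-1}$, producing the stated recursion for $1\le k\le r-1$ and, at $\tau^{r+1}$, the relation $\tilde\Delta^q = a^{q^r-1}\Delta$.

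The routine parts are the coefficient bookkeeping giving the recursions and the $\tilde\Delta$-formulas, together with the elementary check that $\deg Q = \deg\tilde Q = \psi_r(T) = (q^r-1)/(q-1)$ matches the number of lines in $\phi[T]$, so that no isogeny is missed. The genuine obstacle is the identification in part (2): whereas the right-division remainder of part (1) is literally $\phi_T(\lambda)/\lambda$ and hence transparently $Q$, the left-division remainder of part (2) is defined only implicitly through iterated $q$-th roots, and extracting the explicit closed form $\tilde Q$ is where the real work lies. I would organize that step as an induction on $r$, peeling off the top Frobenius-twisted term at each stage.
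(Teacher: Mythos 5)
Your part~(1) is correct and takes a genuinely different route from the paper. The paper introduces the dual isogeny $\hat f$ with $\hat f\circ f=\phi_T$, derives the recursion $b_k=g_k-b_{k-1}a^{q^{k-1}}$ for its coefficients, and obtains $Q(a)=0$ as the $k=r$ case of the closed form of that recursion. You instead read off the condition directly from the kernel: $\ker f$ is spanned by $\lambda$ with $\lambda^{q-1}=-1/a$, and $\phi_T(\lambda)=0$ becomes $(-1)^rQ(a)=0$ after substituting $\lambda^{q^i-1}=(-1/a)^{(q^i-1)/(q-1)}$ and clearing denominators. This is clean, the sign identity $(-1)^{(q^i-1)/(q-1)}=(-1)^i$ is right, and your coefficient comparisons in $\tilde\phi_Tf=f\phi_T$ (including the $\tau^{r+1}$ term giving $\tilde\Delta=a^{1-q^r}\Delta^q$) match the paper's. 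What your route buys is directness; what the paper's route buys is that the identical mechanism carries over verbatim to part~(2), which is exactly where your proposal runs into trouble.

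In part~(2) there is a genuine gap, and you identify it yourself: after correctly reducing to the left factorization $\phi_T=f\hat f$ (the same starting point as the paper's equation $f\circ\hat f=\phi_T$), you run the division algorithm \emph{from the top}, so the coefficients $c_k$ of $\hat f$ are defined through nested $q$-th roots, and the derivation of the explicit polynomial $\tilde Q$ is deferred to an elimination/induction on $r$ that you never carry out --- you call it ``where the real work lies.'' That step is the entire content of part~(2), so as it stands the proposal does not prove it. Moreover, the difficulty is an artifact of dividing from the wrong end: since $f=1+a\tau$ has constant term $1$, you can match coefficients of $\tau^k$ in $\phi_T=(1+a\tau)\hat f$ \emph{from $k=0$ upwards}. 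Writing $\hat f=\sum_k b_k\tau^k$, this gives $b_0=T$ and $b_k=g_k-ab_{k-1}^q$ --- no $q$-th roots, no division by $a$ --- and the same one-line induction as in part~(1) yields
\begin{equation*}
b_k=\sum_{i=0}^k(-1)^i\,g_{k-i}^{q^i}\,a^{\frac{q^i-1}{q-1}},
\end{equation*}
so that the consistency condition at the top coefficient, $b_r=0$, is \emph{literally} $\tilde Q(a)=0$. This is the paper's proof, and with it part~(2) is exactly as routine as part~(1). A further small point: your completeness check counts ``lines in $\phi[T]$,'' which is the right count for outgoing isogenies; incoming $T$-isogenies correspond to hyperplanes of $\phi[T]$ (equivalently, lines in the various $\tilde\phi[T]$). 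The number is the same, $\psi_r(T)$, and since your converse direction is constructive (each root of $\tilde Q$ produces a factorization, hence an incoming isogeny $\tilde\phi\to\phi$ with $\tilde\phi_T=\hat f f$), the count is not actually needed --- but as written the justification points at the wrong set.
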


\begin{proof}
    To prove Part 1, we fix $\phi$ and consider an outgoing $T$-isogeny
    $f : \phi \to \tilde{\phi}$.
    Then $f(X)$ is $\BF_q$-linear of degree $q$ and up to isomorphism, we may assume the linear term is $X$, so $f(X) = X + aX^q$. 

    There exists a dual isogeny $\hat{f} : \tilde\phi \to \phi$,
    \[
    \hat{f}(X) = b_0X + b_1X^q + \cdots + b_{r-1}X^{q^{r-1}},
    \]
    for which 
    \begin{equation}\label{eq:dual}
        \hat{f}\circ f(X) = \phi_T(X).
    \end{equation}

    We compare coefficients in (\ref{eq:dual}) and obtain $b_0=T$ along with
    \[
    b_k = g_k - b_{k-1}a^{q^{k-1}}, \quad k=1,2,\ldots,r,
    \]
    where we set $b_r=0$ and $g_r=\Delta$. By induction on $k$, it follows that
    \[
    b_k = \sum_{i=0}^k (-1)^i g_{k-i} a^{\frac{q^k-q^{k-i}}{q-1}}.
    \]
    Now the $k=r$ case is just
    \[
    Q(a)=0,
    \]
    as required. 

    Every $T$-isogeny $f : \phi\rightarrow \tilde{\phi}$ gives rise to a root $a$ of $Q(x)$, and different roots $a$ correspond to different factorisations of $\phi_T(X) = \hat{f}\circ f(X)$, hence correspond to non-isomorphic isogenies.
    
    Conversely, the number of $T$-isogenies up to isomorphism is $\psi_r(T) = \deg_x(Q)$, so every root of $Q(x)$ corresponds to such an isogeny.

    Finally, for a given isogeny $f(X)=X+aX^q$, we obtain the stated relations between the coefficients of $\phi$ and $\tilde{\phi}$ by comparing coefficients in 
    $f\circ\phi_T(X) = \tilde{\phi}_T\circ f(X)$.
    This completes the proof of Part 1.

    To prove Part 2, we fix $\phi$ and suppose $f : \tilde{\phi}\to\phi$ is an incoming $T$-isogeny.
    Then $f(X)$ is $\BF_q$-linear of degree $q$ and up to isomorphism, we may assume the linear term is $X$, so $f(X) = X + aX^q$. 

    We compare coefficients in 
    \begin{equation}
        \label{eq:dual2}
        f\circ \hat{f}(X) = \phi_T(X),
    \end{equation}
    where 
    \[
    \hat{f}(X) = b_0X + b_1X^q + \cdots + b_{r-1}X^{q^{r-1}}
    \]
    is a dual isogeny, and obtain $b_0=T$ and 
    \[
    b_k = g_k - ab_{k-1}^q, \quad k=1,2,\ldots,r,
    \]
    with the conventions $b_r=0$ and $g_r=\Delta$.
    By induction on $k$, it follows that 
    \[
    b_k = \sum_{i=0}^k(-1)^i g_{k-i}^{q^i} a^{\frac{q^i-1}{q-1}}.
    \]
    Again, the case $k=r$ is simply $0 = Q(a),$
    as required.

    The same argument as in Part 1 now concludes the proof.
\end{proof}

We record a variant of Proposition \ref{Main1}, which is more useful when computing Drinfeld modular polynomials.

\begin{prop}\label{altisog}
    Fix a Drinfeld module $\phi$ as above.
\begin{enumerate}

    \item 
    Consider the polynomial
    \[
    Q(x) = \sum_{i=0}^r(-1)^i g_{r-i} x^{\frac{q^r-q^{r-i}}{q-1}} \in L[x]
    \]
    with the conventions $g_0=T$ and $g_r=\Delta$.
    Then, up to isomorphism, all outgoing $T$-isogenies $f : \phi \to \tilde{\phi}$ are given by
    \[
    f(X) = a^{-1}X + X^q
    \]
    where $a$ ranges over the roots of $Q(x)$. 
    Furthermore, for each such $a$,  the coefficients of $\tilde{\phi}$ are given by
    \begin{align*}
    & \tilde{g}_k = a^{q^k}(a^{-1}g_k + g_{k-1}^q - \tilde{g}_{k-1}),
    \quad k=1,\ldots,r-1 \\
    & \tilde{\Delta} = \Delta^q,
    \end{align*}
    with the conventions $g_0 = \tilde{g}_0 = T$.

    \item
    Consider the polynomial 
    \[
    \tilde{Q}(x) := \sum_{i=0}^r(-1)^ig_{r-i}^{q^{i-1}}x^{\frac{q^i-1}{q-1}} \in L[x]
    \]
    with the conventions $g_0=T$ and $g_r=\Delta$.
    
    Then, up to isomorphism, all incoming $T$-isogenies $f : \tilde{\phi} \to \phi$ are given by
    \[
    f(X) = a^{-1}X + X^q
    \]
    where $a$ ranges over the roots of $\tilde{Q}(x)$. 
    Furthermore, for each such $a$,  the coefficients of $\tilde{\phi}$ are given by
    \begin{align*}
    & \tilde{g}_k = a(g_{k-1} - \tilde{g}_{k-1}^q) + a^{1-q^k}g_k,
    \quad k=1,\ldots,r-1 \\
    & \tilde{\Delta}^q = \Delta,
    \end{align*}
    with the conventions $g_0 = \tilde{g}_0 = T$.

\end{enumerate}
\end{prop}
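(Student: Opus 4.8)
The plan is to derive Proposition \ref{altisog} directly from Proposition \ref{Main1} by observing that the two statements parametrize the very same sets of isogenies, differing only in the normalization chosen for the representative $f$. In Proposition \ref{Main1} the isogeny is scaled so that its linear coefficient equals $1$, whereas here it is scaled so that its top coefficient (that of $X^q$) equals $1$. Passing between the two normalizations amounts to composing $f$ with a scaling isomorphism $X\mapsto cX$, and the claimed formulas should then follow by transporting the coefficient relations of Proposition \ref{Main1} through this isomorphism. I expect the outgoing case to be purely formal and the incoming case to hide the one genuine subtlety.

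For Part 1 (outgoing isogenies) I would start from the representative $f_{\mathrm M}(X)=X+aX^q$ of Proposition \ref{Main1}, with $a$ a root of the \emph{same} polynomial $Q(x)$. Since $a^{-1}X+X^q=a^{-1}f_{\mathrm M}(X)$, the new representative is obtained by post-composing $f_{\mathrm M}$ with the isomorphism $u\colon Y\mapsto a^{-1}Y$ of the target. Thus the new target $\tilde\phi=u\,\tilde\phi_{\mathrm M}\,u^{-1}$ satisfies $\tilde\phi_T(X)=a^{-1}(\tilde\phi_{\mathrm M})_T(aX)$, giving $\tilde g_k=a^{q^k-1}\tilde g_k^{\mathrm M}$ and $\tilde\Delta=a^{q^r-1}\tilde\Delta_{\mathrm M}$. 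Substituting $\tilde\Delta_{\mathrm M}=a^{1-q^r}\Delta^q$ yields $\tilde\Delta=\Delta^q$, and inserting $\tilde g_{k-1}^{\mathrm M}=a^{1-q^{k-1}}\tilde g_{k-1}$ into the recurrence of Proposition \ref{Main1} reproduces $\tilde g_k=a^{q^k}(a^{-1}g_k+g_{k-1}^q-\tilde g_{k-1})$ once the powers of $a$ are collected.

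For Part 2 (incoming isogenies) the target $\phi$ is fixed, so I would renormalize by pre-composing with a scaling of the \emph{source} rather than post-composing on the target. The main point to get right is that this forces a $q$-th-power relation between the two root parameters: writing $\alpha$ for the root of the Proposition \ref{Main1} polynomial and $a$ for the root appearing here, one has $\alpha=a^q$, because $f_{\mathrm M}(a^{-1}X)=a^{-1}X+\alpha a^{-q}X^q=a^{-1}X+X^q$ exactly when $\alpha a^{-q}=1$. This is precisely consistent with the change of exponent from $q^i$ to $q^{i-1}$ in $\tilde Q(x)$: applying Frobenius to the new polynomial gives the identity $\tilde Q_{\mathrm{alt}}(a)^q=\tilde Q_{\mathrm{Main1}}(a^q)$, so that $a$ is a root of the former exactly when $a^q=\alpha$ is a root of the latter. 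I would establish this Frobenius identity first, to fix the correct correspondence of roots before touching the coefficient formulas.

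With $\alpha=a^q$ in hand, the pre-composition $f=f_{\mathrm M}\circ v$ with $v\colon X\mapsto a^{-1}X$ gives $\tilde\phi_T(X)=a\,(\tilde\phi_{\mathrm M})_T(a^{-1}X)$, hence $\tilde g_k=a^{1-q^k}\tilde g_k^{\mathrm M}$ and $\tilde\Delta=a^{1-q^r}\tilde\Delta_{\mathrm M}$. Substituting $\alpha=a^q$ into the Proposition \ref{Main1} relations $\tilde\Delta_{\mathrm M}^q=\alpha^{q^r-1}\Delta$ and $\tilde g_k^{\mathrm M}=g_k+\alpha^{q^{k-1}}g_{k-1}-\alpha(\tilde g_{k-1}^{\mathrm M})^q$, and then eliminating $\tilde g_{k-1}^{\mathrm M}$ via $\tilde g_{k-1}^{\mathrm M}=a^{q^{k-1}-1}\tilde g_{k-1}$, should collapse all powers of $a$ and yield $\tilde\Delta^q=\Delta$ together with $\tilde g_k=a(g_{k-1}-\tilde g_{k-1}^q)+a^{1-q^k}g_k$. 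The only real obstacle is careful bookkeeping of the exponents of $a$ in this final substitution; everything else is mechanical. As an alternative one could bypass Proposition \ref{Main1} altogether and rerun its coefficient-comparison argument with the normalization $f(X)=a^{-1}X+X^q$ from the outset, but deriving the statement as a scaled variant is considerably shorter.
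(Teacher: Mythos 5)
Your proposal is correct, and it takes a genuinely different route from the paper: the paper's proof of Proposition~\ref{altisog} is simply to rerun the coefficient-comparison argument of Proposition~\ref{Main1} (comparing coefficients in $\hat f\circ f=\phi_T$, resp.\ $f\circ\hat f=\phi_T$, now with the normalization $f(X)=a^{-1}X+X^q$), whereas you deduce the statement \emph{from} Proposition~\ref{Main1} by transporting its formulas through scaling isomorphisms. I checked the bookkeeping: in Part~1, post-composition by $Y\mapsto a^{-1}Y$ gives $\tilde g_k=a^{q^k-1}\tilde g_k^{\mathrm M}$, $\tilde\Delta=a^{q^r-1}\tilde\Delta_{\mathrm M}$, and substitution into the recurrence of Proposition~\ref{Main1} reproduces exactly the stated relations; in Part~2, pre-composition by $X\mapsto a^{-1}X$ on the source forces $\alpha=a^q$, and your Frobenius identity $\tilde Q_{\mathrm{alt}}(a)^q=\tilde Q_{\mathrm{Main1}}(a^q)$ does hold (the signs are fine since $(-1)^{iq}=(-1)^i$ for any prime power $q$, and the exponents match because $q\cdot\frac{q^i-1}{q-1}=\frac{q^{i+1}-q}{q-1}$), after which the elimination collapses to $\tilde\Delta^q=\Delta$ and $\tilde g_k=a(g_{k-1}-\tilde g_{k-1}^q)+a^{1-q^k}g_k$ as claimed. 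Your route buys two things the paper's does not: it avoids redoing the induction on the dual-isogeny coefficients, and it gives a conceptual explanation of the otherwise mysterious shift of exponents from $g_{r-i}^{q^i}$ to $g_{r-i}^{q^{i-1}}$ in $\tilde Q$ --- the roots of the new polynomial are the $q$-th roots of the old ones, matching the fact that the new polynomial's coefficients are $q$-th roots (e.g.\ constant term $\Delta^{1/q}$). The paper's direct recomputation, by contrast, is self-contained and does not need the existence and uniqueness of $q$-th roots of the parameters $\alpha$; in your argument you should note explicitly that this is harmless because all roots live in an algebraic closure of $L$, where Frobenius is bijective, so the root correspondence $a\mapsto a^q$ is one-to-one and preserves the count of isogeny classes.
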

    
\begin{proof}
    The proof is similar to that of Proposition \ref{Main1}.
\end{proof}

\section{Bounding the coefficients}


In this section, we apply our description of $T$-isogenies (Proposition \ref{Main1}, which is easier to use here than Proposition \ref{altisog}) to estimate the $T$-degrees of the coefficients of our Drinfeld modular polynomials.

Let $F = \BF_q(g_1,g_2,\ldots,g_{r-1})$ and consider $K = \BF_q(T, g_1, \ldots, g_{r-1})$ as a rational function field of transcendence degree one over $F$. 
We endow $K$ with the valuation $v : K\to\BZ$ with uniformizer $\frac{1}{T}$, i.e. 
$v(T) = -1$ and $v(x) = 0$ for all $x\in F$. 

Let $K_T$ denote the splitting field of $\varphi_T(X)$ over $K$
and extend the valuation $v$ to any valuation of $K_T$.
By \cite[\S2]{BR16} the roots of 
\[
\Phi_{J(A/TA)}(X), \Phi_{J,(A/TA)^{r-1}}(X) \in K[X]
\]
lie in $K_T$. Since the $T$-degree of each coefficient equals the negative of its valuation, our approach is to prove Theorem \ref{Main} by computing the valuation of the roots of 
these Drinfeld modular polynomials.




\begin{proof}[Proof of Theorem \ref{Main}]

We first prove the case $s=1$. 
Applying Proposition \ref{Main1} to the $T$-isogenies $f : \varphi \to \varphi^{(f)}$, we see that each isogeny corresponds to a root $a$ of 
\[
Q(x) = 1 + \left(\sum_{i=1}^{r-1}(-1)^i g_{r-i} x^{\frac{q^r-q^{r-i}}{q-1}}\right) + (-1)^r T x^{\frac{q^r-1}{q-1}} \in K[x].
\]
We extend the valuation $v$ to any valuation on the splitting field of $Q(x)$. 
The Newton polygon of $Q(x)$ consists of a single line segment from the origin to the point $\left(\frac{q^r-1}{r-1}, -1\right)$. It follows that each root $a$ has valuation
\[
v(a) = \frac{q-1}{q^r-1}.
\]
By Proposition \ref{Main1}, the coefficients of the corresponding target Drinfeld module $\tilde{\phi}=\varphi^{(f)}$ satisfy
\[
\tilde{g}_1 = g_1 + a(T^q-T)
\]
\[
\tilde{g}_k = g_k + ag_{k-1}^q - a^{q^{k-1}}\tilde{g}_{k-1}, \quad  k=2,3,\ldots,r.
\]
In particular, we see by induction on $k$ that 
\[
v(\tilde{g}_k) = \frac{q^k-1}{q^r-1} - q < 0\quad\text{for}\quad k=1,2,\ldots,r.
\]


Since $\sum_{i=1}^{r-1}e_i(q^i-1) = e_r(q^r-1)$ we compute:
\begin{align*}
v(J(\varphi^{(f)})) & = \sum_{i=1}^{r-1}e_i v(\tilde{g}_i) - e_r v(\tilde{\Delta}) \\
& = \sum_{i=1}^{r-1}e_i \left(\frac{q^i-1}{q-1} - q\right) + e_r(q-1) \\
& = -q\left( \sum_{i=1}^{r-1}e_i - e_r\right) = -w_1(J).
\end{align*}

Since each coefficient $a_i$ of $X^i$ in $\Phi_{J,(A/TA)}(X)$ is an elementary symmetric polynomial of degree $\psi_r(T) - i$ in the roots $J(\varphi^{(f)})$, the estimate for $s=1$ follows.

\medskip

To prove the case $s=r-1$, we perform a similar calculation, this time for the incoming $T$-isogenies $f : \varphi^{(f)} \to \varphi$. By Proposition \ref{Main1}, these correspond to roots $a$ of 
\[
\tilde{Q}(x) = 1 + \left(\sum_{i=1}^{r-1}(-1)^i g_{r-i}^{q^i}x^{\frac{q^i-1}{q-1}}\right) + (-1)^r T^{q^r} x^{\frac{q^r-1}{q-1}} \in K[x].
\]
We again extend the valuation $v$ to the splitting field of $\tilde{Q}(x)$.
The Newton polygon of $\tilde{Q}(x)$ consists of a single line segment from the origin to the point $\left(\frac{q^r-1}{q-1}, -q^r\right)$, hence each root $a$ has valuation
\[
v(a) = q^r\left(\frac{q-1}{q^r-1}\right).
\]
Using Proposition \ref{Main1}, we find again by induction on $k$ that the coefficients of the source Drinfeld module $\tilde{\phi} = \varphi^{(f)}$ satisfy
\begin{align*}
v(\tilde{g}_k) & = \frac{q^k-1}{q^r-1} - 1 < 0, \quad k=1,2,\ldots, r-1, \\
v(\tilde{\Delta}) & =  q^{r-1}(q-1).
\end{align*}

Again, we compute
\begin{align*}
v(J(\tilde{\phi})) & = \sum_{i=1}^{r-1}e_i v(\tilde{g}_i) - e_rv(\tilde{\Delta}) \\
& = \sum_{i=1}^{r-1}e_i \left(\frac{q^i-1}{q^r-1} - 1\right) - e_r q^{r-1}(q-1) \\
& = -\sum_{i=1}^{r-1}e_i - e_r(q^r-q^{r-1}-1) = -w_{r-1}(J).
\end{align*}
The result now follows.
\end{proof}

\begin{rem}
    One may similarly obtain bounds on the $g_i$-degrees of the terms in $\Phi_{J,(A/TA)^s}(X)$ by considering a valuation $v_i$ with uniformizer $1/g_i$. 
    However, the resulting bounds are more complicated and less sharp. This may be a topic for future work. Similar bounds are obtained in \cite{BR09} via analytic methods.  
\end{rem}

\section{Examples}

In this section we compute some examples of Drinfeld modular polynomials and verify the bounds in Theorem \ref{Main}.

Let $J = g_1^{e_1}\cdots g_{r-1}^{e_{r-1}} \Delta^{-e_r} \in C$ be a monomial invariant. It will be more convenient to work with the isomorphic ring $C'$ of invariants, where $\Delta=1$ and $J = g_1^{e_1}\cdots g_{r-1}^{e_{r-1}}$.

To compute the Drinfeld modular polynomials $\Phi_{J, A/TA}(X)$, we follow the approach of \cite{BR09}. By Proposition \ref{altisog}, the isogenies $f : \varphi \rightarrow \varphi^{(f)}$ are given by $f(X) = a^{-1}X + X^q$, where $a$ ranges over the roots of $Q(x)$. It will be convenient to instead consider the roots $v = a^{-1}$ of the reciprocal polynomial 
\[
P(x) = x^{\psi_r(T)}Q(x^{-1}).
\]
Let $M_v$ be an $r\times r$ matrix over $K$ with characteristic polynomial $P(x)$ and compute its inverse $M_a = M_v^{-1}$. This plays the role of our roots $a$ (more precisely, its eigenvalues are the $a$'s).

Next, compute the matrices $M_{\tilde{g_i}}$ corresponding to the coefficients of the target Drinfeld module by substituting the matrix $M_a$ into $a$ in the expressions for $\tilde{g}_i$ given in Proposition \ref{altisog}.

Finally, our polynomial $\Phi_{J, A/TA}(X)$ is the characteristic polynomial of the matrix $M_{\tilde{J}}$ corresponding to the invariant $\tilde{J} = J(\phi^{(f)})$:
\begin{align*}
M_{\tilde{J}} & = M_{\tilde{g}_1}^{e_1}\cdots M_{\tilde{g}_{r-1}}^{e_{r-1}}, \\
\Phi_{J, A/TA}(X) & = \det(IX - M_{\tilde{J}}) \\
& = a_0 + a_1X + \cdots + a_{\psi_r(T)-1}X^{\psi_{r-1}(T)} + X^{\psi_r(T)} \in C'[X].
\end{align*} 
The polynomial $\Phi_{J, (A/TA)^{r-1}}(X)$ is computed similarly, this time using $\tilde{Q}(x)$ instead of $Q(x)$. Note that the constant coefficient of $\tilde{Q}(x)$ is $\Delta^{1/q} = 1$.


The easiest coefficient to compute is $a_{\psi_r(T)-1} = -\mathrm{Tr}\big(M_{\tilde{J}}\big)$; its degree is bounded by the weight $w_1(J)$ or $w_{r-1}(J)$ from Theorem \ref{Main}. Computing this first and checking that the result is indeed in $C'[X]$ is a valuable sanity check before attempting to compute the complete polynomial.

This approach via the characteristic polynomial has the advantage of being conceptually simple and easy to implement. It is, however, very slow, with both the time and space requirements growing rapidly with $\psi_r(T)$ and $w_s(J)$. The ``small'' cases reported below were computed on a mid-level (2024 era) gaming laptop  using Pari/GP \cite{PARI2} and took at most a few hours each.
The code can be found in \cite{Github}. 

\subsection{The case $r=3$, $q=2$.}

When $r=3$ and $A=\BF_2[T]$, we have $\psi_3(t) = 7$ and invariants are of the form
\[
J_{e_1\, e_2} = g_1^{e_1}g_2^{e_2}\Delta^{-e_3},\quad\text{where}\quad
e_1 + 3e_2 = 7e_3.
\]
The relevant weights are
\[
w_1(J_{e_1\, e_2}) = 2(e_1 + e_2 - e_3) 
\quad\text{and}\quad
w_2(J_{e_1\, e_2}) = e_1 + e_2 + 3e_3.
\]

It it shown in \cite{BR09} that the four invariants in the following table generate the ring $C$ of invariants:

\medskip

\begin{tabular}{|c|c|c|c|c|c|}\hline
    $J$ & $e_1$ & $e_2$ & $e_3$ & $w_1(J)$ & $w_2(J)$ \\ \hline 
    $J_{12}$ & 1 & 2 & 1 & 4 & 6 \\
    $J_{41}$ & 4 & 1 & 1 & 8 & 8 \\
    $J_{70}$ & 7 & 0 & 1 & 12 & 10 \\
    $J_{07}$ & 0 & 7 & 3 & 8 & 16 \\ \hline
\end{tabular}

\medskip 
The polynomials $\Phi_{J, (A/TA)^2}(X)$ for these invariants were computed in \cite{BR09} and one finds that all of the degree bounds from Theorem \ref{Main} are in fact equalities.

We similarly computed the polynomials $\Phi_{J, (A/TA)}(X)$ and found again that all bounds in Theorem \ref{Main} are sharp for these four invariants.

\subsection{The case $r=4$, $q=2$.}

In this case our modular polynomials have degree $\psi_2(T) = 15$. 
Invariants are given by
\[
J_{e_1\, e_2\, e_3} = g_1^{e_1}g_2^{e_2}g_3^{e_3}\Delta^{-e_4},
\quad\text{where}\quad
e_1 + 3e_2 + 7e_3 = 15e_4.
\]
The relevant weights are
\[
w_1(J_{e_1\, e_2\, e_3}) = 2(e_1 + e_2 + e_3 - e_4) 
\quad\text{and}\quad
w_3(J_{e_1\, e_2\, e_3}) = e_1 + e_2 + e_3 + 7e_4.
\]

We list some small invariants in $C$:

\medskip

\begin{tabular}{|c|c|c|c|c|c|c|}\hline
    $J$ & $e_1$ & $e_2$ & $e_3$ & $e_4$ & $w_1(J)$ & $w_3(J)$ \\ \hline 
    $J_{102}$ & 1 & 0 & 2 & 1 & 4 & 10 \\
    $J_{050}$ & 0 & 5 & 0 & 1 & 8 & 12 \\
    $J_{221}$ & 2 & 2 & 1 & 1 & 8 & 12 \\ 
    \hline
\end{tabular}

\medskip

Once again, we find that the bounds in Theorem \ref{Main} are sharp in the cases of $\Phi_{J_{102}, A/TA}(X),$ $\Phi_{J_{221}, A/TA}(X),$
$ \Phi_{J_{050},(A/TA)}(X)$ and $\Phi_{J_{102}, (A/TA)^3}(X)$.

\subsection{The case $r=3$, $q=3$.}

In this case our modular polynomials have degree $\psi_3(T) = 13$. We list some small invariants in $C$:

\medskip

\begin{tabular}{|c|c|c|c|c|c|}\hline
    $J$ & $e_1$ & $e_2$ & $e_3$ & $w_1(J)$ & $w_2(J)$ \\ \hline 
    $J_{13}$ & 1 & 3 & 1 & 9 & 21 \\
    $J_{52}$ & 5 & 2 & 1 & 18 & 24 \\
    $J_{91}$ & 9 & 1 & 1 & 27 & 27 \\
    $J_{26}$ & 2 & 6 & 2 &18 & 42 \\ \hline
\end{tabular}

\medskip

In this case, the degree bounds from Theorem \ref{Main} are not all sharp:
\medskip

{\small
\begin{tabular}{|c||c|c||c|c||c|c|} \hline 
    & \multicolumn{2}{|c||}{$\Phi_{J_{13}, (A/TA)}(X)$} & 
    \multicolumn{2}{|c||}{$\Phi_{J_{52}, (A/TA)}(X)$} & 
    \multicolumn{2}{|c|}{$\Phi_{J_{26}, (A/TA)}(X)$}\\ \hline 
     $i$ & $\deg_T(a_i)$ & $9(13 - i)$ & $\deg_T(a_i)$ & $18(13-i)$ & $\deg_T(a_i)$ & $18(13-i)$ \\ \hline
     0 & 117 & 117 & 234 & 234 & 234 & 234\\
    1 & 108 & 108 & 216 & 216 & 216 & 216\\
    2 & \bf{94} & 99 & \bf{193} & 198 & \bf{193} & 198\\
    3 & 90 & 90 & 180 & 180 & 180 & 180\\
    4 & 81 & 81 & 162 & 162 & 162 & 162\\
    5 & \bf{58} & 72 & \bf{133} & 144 & \bf{130} & 144\\
    6 & \bf{49} & 63 & \bf{115} & 126 & \bf{112} & 126\\ 
    7 & \bf{40} & 54 & \bf{97} & 108 & \bf{94} & 108\\
    8 & \bf{31} & 45 & \bf{79} & 90 & \bf{76} & 90\\
    9 & 36 & 36 & 72 & 72 & 72 & 72\\
    10 & 27 & 27 & 54 & 54 & 54 & 54\\
    11 & \bf{13} & 18 & \bf{31} & 36 & \bf{31} & 36\\
    12 & 9 & 9 & 18 & 18 & 18 & 18\\ \hline 
\end{tabular}
}

\medskip

\begin{tabular}{|c||c|c||c|c|} \hline 
    & \multicolumn{2}{|c||}{$\Phi_{J_{13}, (A/TA)^2}(X)$} 
    & \multicolumn{2}{|c|}{$\Phi_{J_{52}, (A/TA)^2}(X)$} \\ \hline 
    $i$ & $\deg_T(a_i)$ & $21(13 - i)$ 
    & $\deg_T(a_i)$ & $24(13 - i)$\\ \hline 
    0 & 273 & 273 & 312 & 312\\
    1 & 252 & 252 & 288 & 288\\ 
    2 & \bf{228} & 231 & \bf{255} & 264\\
    3 & 210 & 210 & 240 & 240\\
    4 & 189 & 189 & 216 & 216\\
    5 & \bf{159} & 168 & \bf{180} & 192\\
    6 & \bf{138} & 147 & \bf{159} & 168\\
    7 & \bf{117} & 126 & \bf{132} & 144\\
    8 & \bf{96} & 105 & \bf{108} & 120\\
    9 & 84 & 84 & 96 & 96\\
    10 & 63 & 63 & 72 & 72\\
    11 & \bf{39} & 42 & \bf{39} & 48\\
    12 & 21 & 21 & 24 & 24\\ \hline
\end{tabular}

\medskip

It is very curious that in all five cases above, the coefficients where the bound fails to be sharp are $a_{2}, a_{5}, a_{6}, a_{13-6}, a_{13-5}$ and $a_{13-2}$.

\paragraph{Acknowledgements.} The first author was supported by the Alexander-von-Humboldt Foundation. We thank Fabien Pazuki for helpful comments.

\bibliographystyle{amsplain}
\bibliography{ModPolyT} 

\noindent
\textbf{Florian Breuer} \\
School of Mathematical Sciences \\
University of Newcastle, Australia. \\
\href{mailto:florian.breuer@newcastle.edu.au}{florian.breuer@newcastle.edu.au}

\vspace{1em}

\noindent
\textbf{Heriniaina Razafinjatovo} \\
1) University of Antananarivo, Antananarivo, Madagascar. \\
\href{mailto:heriniaina.razafinjatovo@univ-antananarivo.mg}{heriniaina.razafinjatovo@univ-antananarivo.mg} \\
2) iTUNIVERSITY, Andoharanofotsy, Madagascar. \\
\href{mailto:heriniaina.razafinjatovo@ituniversity-mg.com}{heriniaina.razafinjatovo@ituniversity-mg.com}

\end{document}